\numberwithin{equation}{section}
\newtheorem{theorem}{Theorem}[section]
\newtheorem{lemma}{Lemma}[section]
\theoremstyle{remark}
\newtheorem{remark}{Remark}[section]
\title[Notes on the norm of pre-Schwarzian derivatives]
{Notes on the norm of pre-Schwarzian derivatives
of certain analytic functions}
\subjclass[2010]{30C45}
\keywords{Analytic; Univalent; Locally univalent; Subordination; Pre--Schwarzian norm.}
\begin{document}
\begin{abstract}
In this paper, we obtain sharp bounds for the norm of pre--Schwarzian derivatives
of certain analytic functions.
Initially this problem was handled by H. Rahmatan, Sh. Najafzadeh and A. Ebadian [Stud Univ Babe\c{s}--Bolyai Math {\bf61}(2): 155--162, 2016]. We pointed out that the proofs by Rahmatan et al. are incorrect and present correct proofs.
\end{abstract}

\author[ R. Kargar] {R. Kargar}
\address{Young Researchers and Elite Club,
Ardabil Branch, Islamic Azad University, Ardabil, Iran}
\email{rkargar1983@gmail.com, rkargar@pnu.ac.ir}

\maketitle

\section{Introduction}
Let $\Delta$ be the open unit disc on the complex plane $\mathbb{C}$. Let $\mathcal{H}$ be the family of all analytic functions and $\mathcal{A}\subset \mathcal{H}$ be the family of all normalized functions in $\Delta$. We denote by $\mathcal{U}$ the class of all univalent functions in $\Delta$ and denote by $\mathcal{LU}\subset \mathcal{H}$ the class of all locally univalent functions in $\Delta$. For a $f\in\mathcal{LU}$, we consider the following norm
\begin{equation*}
  ||f||=\sup_{z\in\Delta}(1-|z|^2)\left|\frac{f''(z)}{f'(z)}\right|,
\end{equation*}
where the quantity $f''/f$ is often referred to as pre--Schwarzian derivative of $f$ and in the theory of Teichm\"{u}ller spaces is considered as element of complex Banach spaces. We remark that $||f||<\infty$ if, and only if, $f$ is uniformly locally univalent in $\Delta$. Note that, $||f||\leq 6$ if $f$ is univalent in $\Delta$ and, conversely, $f$ is univalent in $\Delta$ if
$||f|\leq 1$. Both of these bounds are sharp, see \cite{BecPom}. For more geometric properties of the function $f$ relating the norm, see \cite{choi, kimsug, PonSS} and the references therein.

We say that a function $f$ is subordinate to $g$, written by $f(z)\prec g(z)$ or $f\prec g$ where $f$ and $g$ belonging to the class $\mathcal{A}$, if there exists a Schwarz function $w(z)$ is analytic in $\Delta$ with
  \begin{equation*}
    w(0)=0\quad{\rm and}\quad |w(z)|<1\quad(z\in\Delta),
  \end{equation*}
such that $f(z)=g(w(z))$ for all $z\in\Delta$.

In the sequel, we recall two definition which are certain subclasses of analytic and normalized functions $\mathcal{A}$. First, we say that a function $f\in\mathcal{A}$ belongs to the class $\mathcal{S}(\alpha,\beta)$ if it satisfies the following two--sided inequality
\begin{equation*}
  \alpha<{\rm Re}\left\{\frac{zf'(z)}{f(z)}\right\}<\beta\quad(z\in\Delta),
\end{equation*}
where $0\leq \alpha<1$ and $\beta>1$. The class $\mathcal{S}(\alpha,\beta)$ was introduced by Kuroki and Owa (cf. \cite{KO2011}). Also, we say that a function $f\in\mathcal{A}$ belongs to the class $\mathcal{V}(\alpha,\beta)$ if
\begin{equation*}
  \alpha<{\rm Re}\left\{\left(\frac{z}{f(z)}\right)^2f'(z)\right\}<\beta\quad(z\in\Delta).
\end{equation*}
The class $\mathcal{V}(\alpha,\beta)$ was first introduced by Kargar et al., see \cite{KES(Siberian)}.

Since the convex univalent function
\begin{equation}\label{P}
  P_{\alpha,\beta}(z)=1+\frac{(\beta-\alpha)i}{\pi}\log\left(\frac{1-e^{i\phi}z}{1-z}\right)
  \quad(z\in\Delta),
\end{equation}
where
  \begin{equation}\label{phi}
  \phi:=\frac{2\pi(1-\alpha)}{\beta-\alpha},
  \end{equation}
maps $\Delta$ onto the domain $\Omega=\{\omega: \alpha<{\rm Re}\{\omega\}<\beta\}$ conformally, thus we have.
\begin{lemma}\label{lem S alpha beta}
{\rm(}\cite[Lemma 1.3]{KO2011}{\rm )}
  Let $0\leq \alpha<1$ and $\beta>1$. Then $f\in\mathcal{S}(\alpha,\beta)$ if, and only if,
  \begin{equation*}
    \frac{zf'(z)}{f(z)}\prec 1+\frac{(\beta-\alpha)i}{\pi}\log\left(\frac{1-e^{i\phi}z}{1-z}\right)
  \quad(z\in\Delta),
\end{equation*}
where $\phi$ is defined in \eqref{phi}.
\end{lemma}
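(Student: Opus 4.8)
The plan is to invoke the standard subordination principle for univalent functions: if $g$ is univalent in $\Delta$, then an analytic function $p$ on $\Delta$ satisfies $p\prec g$ if and only if $p(0)=g(0)$ and $p(\Delta)\subseteq g(\Delta)$. Here the relevant $g$ is $P_{\alpha,\beta}$, which---as recorded in the paragraph preceding the statement---is a convex, hence univalent, conformal map of $\Delta$ onto the strip $\Omega=\{\omega:\alpha<\operatorname{Re}\omega<\beta\}$ with $P_{\alpha,\beta}(0)=1$. The first step is therefore to note that $P_{\alpha,\beta}$ admits an analytic inverse $P_{\alpha,\beta}^{-1}\colon\Omega\to\Delta$ with $P_{\alpha,\beta}^{-1}(1)=0$.

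For the ``if'' direction I would argue as follows. Assuming $zf'(z)/f(z)\prec P_{\alpha,\beta}(z)$, there is a Schwarz function $w$ with $zf'(z)/f(z)=P_{\alpha,\beta}(w(z))$ on $\Delta$. Since $|w(z)|<1$ and $P_{\alpha,\beta}(\Delta)=\Omega$, the value $zf'(z)/f(z)$ lies in $\Omega$ for every $z\in\Delta$, i.e. $\alpha<\operatorname{Re}\{zf'(z)/f(z)\}<\beta$, so $f\in\mathcal{S}(\alpha,\beta)$.

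For the ``only if'' direction, take $f\in\mathcal{S}(\alpha,\beta)$ and set $p(z)=zf'(z)/f(z)$. Since $f\in\mathcal{A}$, the function $f(z)/z$ is analytic and non-vanishing at $0$, so $p$ is analytic in $\Delta$ with $p(0)=1$, and by hypothesis $p(\Delta)\subseteq\Omega$. Then $w:=P_{\alpha,\beta}^{-1}\circ p$ is analytic in $\Delta$, satisfies $w(0)=P_{\alpha,\beta}^{-1}(1)=0$, and obeys $|w(z)|<1$ because $p(z)\in\Omega$ and $P_{\alpha,\beta}^{-1}(\Omega)\subseteq\Delta$; hence $w$ is a Schwarz function and $p=P_{\alpha,\beta}\circ w$, which is exactly $zf'(z)/f(z)\prec P_{\alpha,\beta}(z)$.

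I do not expect a genuine obstacle. The only points that need a word of care are the regularity of $p$ at the origin---equivalently, that the two-sided inequality defining $\mathcal{S}(\alpha,\beta)$ already forces $f$ to be zero-free on $\Delta\setminus\{0\}$ with a removable singularity of $zf'/f$ at $0$---and the legitimacy of forming $P_{\alpha,\beta}^{-1}$, which rests entirely on the conformal mapping claim for $P_{\alpha,\beta}$ supplied just before the lemma. Everything else is a direct application of the definition of subordination.
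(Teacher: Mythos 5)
Your argument is correct and is exactly the route the paper (following Kuroki--Owa) takes: the lemma is presented as an immediate consequence of the fact that $P_{\alpha,\beta}$ is a univalent conformal map of $\Delta$ onto the strip $\Omega$ with $P_{\alpha,\beta}(0)=1$, which is precisely the subordination principle you invoke. Your write-up merely makes explicit the two directions and the minor regularity points that the paper leaves implicit.
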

\begin{lemma}\label{lem V alpha beta}
{\rm(}\cite[Lemma 1.1]{KES(Siberian)}{\rm )}
  Let $0\leq \alpha<1$ and $\beta>1$. Then $f\in\mathcal{V}(\alpha,\beta)$ if, and only if,
  \begin{equation*}
    \left(\frac{z}{f(z)}\right)^2f'(z)\prec  1+\frac{(\beta-\alpha)i}{\pi}\log\left(\frac{1-e^{i\phi}z}{1-z}\right)
  \quad(z\in\Delta),
\end{equation*}
where $\phi$ is defined in \eqref{phi}.
\end{lemma}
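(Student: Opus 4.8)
The plan is to obtain the lemma as an immediate consequence of the subordination principle for univalent functions, combined with the conformal-mapping fact recorded just before Lemma \ref{lem S alpha beta}. Recall that principle (Lindel\"{o}f's principle): if $h$ is univalent in $\Delta$ and $p\in\mathcal H$ satisfies $p(0)=h(0)$, then $p\prec h$ if and only if $p(\Delta)\subseteq h(\Delta)$. By \eqref{P} and the remark preceding Lemma \ref{lem S alpha beta}, the function $P_{\alpha,\beta}$ is a conformal (in particular univalent) map of $\Delta$ onto the open vertical strip $\Omega=\{\omega:\alpha<\mathrm{Re}\{\omega\}<\beta\}$, and $P_{\alpha,\beta}(0)=1$ since the logarithmic term vanishes at $z=0$.

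First I would set $g(z)=\left(z/f(z)\right)^{2}f'(z)$. For $f\in\mathcal A$ the normalisation $f(z)=z+a_2z^2+\cdots$ makes $z/f(z)$ analytic at the origin with value $1$ there, and $f'(0)=1$; hence $g$ is analytic near $0$ with $g(0)=1$. (That $g$ is analytic throughout $\Delta$ is implicit in the definition of $\mathcal V(\alpha,\beta)$: a zero of $f$ in $\Delta\setminus\{0\}$ would produce a pole of $g$, near which $\mathrm{Re}\{g\}$ is unbounded and could not remain in a bounded strip.) Thus $g(0)=1=P_{\alpha,\beta}(0)$.

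Then I would simply invoke the principle with $h=P_{\alpha,\beta}$ and $p=g$. It gives: $g\prec P_{\alpha,\beta}$ if and only if $g(\Delta)\subseteq P_{\alpha,\beta}(\Delta)=\Omega$, that is, if and only if $\alpha<\mathrm{Re}\{g(z)\}<\beta$ for all $z\in\Delta$. The last condition is exactly the definition of $f\in\mathcal V(\alpha,\beta)$, so both implications of the lemma are proved at one stroke; the argument is entirely parallel to that for Lemma \ref{lem S alpha beta}, with $zf'(z)/f(z)$ replaced by $(z/f(z))^{2}f'(z)$.

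The only step requiring genuine care is identifying the image $P_{\alpha,\beta}(\Delta)$ as precisely the strip $\Omega$ (not merely a subset of it), i.e.\ the conformality of the map in \eqref{P}; here one uses that the choice \eqref{phi} of $\phi$ gives $\phi\in(0,2\pi)$, so $e^{i\phi}\neq 1$ and the logarithmic map is nondegenerate. Since the excerpt already records this conformality, no real obstacle remains: the lemma is essentially a one-line corollary of Lindel\"{o}f's principle, and all the substantive content sits in the conformal-mapping statement preceding it.
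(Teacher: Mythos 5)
Your argument is correct and is exactly the one the paper intends: the sentence preceding Lemma \ref{lem S alpha beta} (that $P_{\alpha,\beta}$ maps $\Delta$ conformally onto the strip $\Omega$, together with $P_{\alpha,\beta}(0)=1=g(0)$) is precisely the input to Lindel\"{o}f's subordination principle, and the paper otherwise just cites the lemma from \cite{KES(Siberian)}. Your added care about the analyticity of $\left(z/f(z)\right)^{2}f'(z)$ is a sensible, correct refinement but does not change the route.
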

Rahmatan et al. (see \cite{RNE}) estimated the norm of pre--Schwarzian derivatives of the function $f$ where $f$ belong to the classes $\mathcal{S}(\alpha,\beta)$ and $\mathcal{V}(\alpha,\beta)$. Both estimates and proofs are incorrect. Indeed, the estimates of $||f||$ were wrongly proven by Rahmatan, Najafzadeh and Ebadian are in the following form:\\ \\
{\bf Theorem A:} For $0\leq \alpha<1<\beta$, if $f\in\mathcal{S}(\alpha,\beta)$, then
\begin{equation*}
  ||f||\leq \frac{2(\beta-\alpha)}{\pi}\left(1-e^{2\pi i\frac{1-\alpha}{\beta-\alpha}}\right).
\end{equation*}
{\bf Theorem B:}
For $0\leq \alpha<1<\beta$, if $f\in\mathcal{V}(\alpha,\beta)$, then
\begin{equation*}
  ||f||\leq \frac{3(\beta-\alpha)}{\pi}\left(1-e^{2\pi i\frac{1-\alpha}{\beta-\alpha}}\right).
\end{equation*}
First, note that both bounds are complex numbers.

In this paper we give the best estimate for $||f||$ when $f\in\mathcal{S}(\alpha,\beta)$ and disprove the Theorem B. However, we show that $||f||<\infty$ when $f\in\mathcal{V}(\alpha,\beta)$.

\section{The Main Results}
The first result of the paper is the following.
\begin{theorem}
  Let $0\leq \alpha<1$ and $\beta>1$. If a function $f$ belongs to the class $\mathcal{S}(\alpha,\beta)$, then
  \begin{equation}\label{norm S alpha beta}
    ||f||\leq \frac{2(\beta-\alpha)}{\pi}\sqrt{4 \sin^2(\phi/2)+2\pi^2}-\frac{4\sin(\phi/2)}{\sqrt{4 \sin^2(\phi/2)+2\pi^2}},
  \end{equation}
  where $\phi$ is defined in \eqref{phi}.
  The result is sharp.
\end{theorem}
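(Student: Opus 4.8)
The plan is to reduce the estimate to an extremal problem for the generating function $P_{\alpha,\beta}$ of \eqref{P}. By Lemma \ref{lem S alpha beta} and the univalence of $P_{\alpha,\beta}$, the membership $f\in\mathcal{S}(\alpha,\beta)$ is equivalent to the existence of a Schwarz function $\omega$ with
\[
\frac{zf'(z)}{f(z)}=P_{\alpha,\beta}(\omega(z))=:p(z),\qquad p(0)=1 .
\]
From $f(z)=z\exp\!\big(\int_0^z\frac{p(t)-1}{t}\,dt\big)$ one obtains, after differentiating $\log f'$, the standard identity
\[
\frac{f''(z)}{f'(z)}=\frac{p'(z)}{p(z)}+\frac{p(z)-1}{z}.
\]

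Next I would combine the chain rule $p'(z)=P_{\alpha,\beta}'(\omega(z))\,\omega'(z)$ with the Schwarz--Pick inequality $(1-|z|^2)|\omega'(z)|\le 1-|\omega(z)|^2$, the Schwarz lemma $|\omega(z)|\le|z|$, and the fact that $t\mapsto (1-t^2)/t$ decreases on $(0,1]$. Writing $\zeta=\omega(z)$, this gives
\[
(1-|z|^2)\left|\frac{f''(z)}{f'(z)}\right|\ \le\ (1-|\zeta|^2)\left|\frac{P_{\alpha,\beta}'(\zeta)}{P_{\alpha,\beta}(\zeta)}\right|+\frac{1-|\zeta|^2}{|\zeta|}\,\big|P_{\alpha,\beta}(\zeta)-1\big|\ =:\ H(\zeta),
\]
so that $\|f\|\le\sup_{\zeta\in\Delta}H(\zeta)$. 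Here $H$ makes sense because ${\rm Re}\,P_{\alpha,\beta}>\alpha\ge 0$ on $\Delta$, hence $P_{\alpha,\beta}$ is zero--free; and $H(\zeta)\to 0$ as $\zeta\to\partial\Delta$ (including at the two singular points $z=1$, $z=e^{-i\phi}$, where the logarithmic blow--up of $P_{\alpha,\beta}$ is dominated by the factor $1-|\zeta|^2$), so the supremum is attained at an interior point.

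It remains to evaluate $\sup_\zeta H(\zeta)$ and to identify it with the right--hand side of \eqref{norm S alpha beta}. I would use
\[
P_{\alpha,\beta}'(\zeta)=\frac{(\beta-\alpha)\,i\,(1-e^{i\phi})}{\pi\,(1-\zeta)(1-e^{i\phi}\zeta)},\qquad |1-e^{i\phi}|=2\sin(\phi/2),
\]
together with the reflection symmetry $P_{\alpha,\beta}\!\big(e^{-i\phi}\overline{\zeta}\big)=\overline{P_{\alpha,\beta}(\zeta)}$, which makes $H$ invariant under the reflection of $\Delta$ across the diameter through $\pm e^{-i\phi/2}$ (the ray bisecting the two singular points). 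One then checks that the maximum of $H$ is attained on that diameter; on it $P_{\alpha,\beta}$ is real--valued and the two terms of $H$ are positive multiples of $e^{i\phi/2}$, so $H$ becomes a one--parameter expression whose critical point, after simplification, produces the quantity $\sqrt{4\sin^2(\phi/2)+2\pi^2}$ and hence the asserted bound. For sharpness one takes $\omega(z)=z$, i.e. the function $f_0$ with $zf_0'(z)/f_0(z)=P_{\alpha,\beta}(z)$: the Schwarz--Pick and Schwarz steps are then equalities, and along the symmetry diameter the two terms of $f_0''/f_0'$ have the same argument, so the triangle inequality above is also an equality at the maximizing point; consequently $\|f_0\|$ attains the right--hand side of \eqref{norm S alpha beta}.

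The main obstacle is this last step: locating the maximizer of $H$ on the symmetry diameter and carrying out the extremal computation in closed form, and in parallel verifying the argument alignment that keeps the triangle inequality from being lossy --- this alignment is precisely the reason one must exploit the reflection symmetry rather than bound the two terms of $H$ independently.
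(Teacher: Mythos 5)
Your setup is sound as far as it goes: the identity $f''/f'=p'/p+(p-1)/z$ with $p=P_{\alpha,\beta}\circ\omega$ is exactly the paper's decomposition \eqref{f second f prime}, and your passage to $H(\zeta)$ via Schwarz--Pick, $|\omega(z)|\le|z|$ and the monotonicity of $(1-t^2)/t$ is correct and cleaner than what the paper does. The genuine gap is that the entire content of the theorem — evaluating $\sup_{\zeta}H(\zeta)$ and showing it equals the right-hand side of \eqref{norm S alpha beta} — is asserted rather than carried out, and the identification would in fact fail. The radical $\sqrt{4\sin^2(\phi/2)+2\pi^2}$ does not arise from an exact maximization of $H$: in the paper it appears only after $|P_{\alpha,\beta}(w(z))-1|$ has been majorized through the crude logarithm bounds \eqref{ineq log 1}--\eqref{ineq log 2}, giving the quantity $\sqrt{4\sin^2(\phi/2)|z|^2+\pi^2(1+|z|^2)}$, which is then evaluated at $|z|=1$. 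Your $H$ contains none of these relaxations, so its supremum is a different (generically smaller) number, and there is no mechanism by which the additive $\pi^2$ would enter it in that form. The auxiliary claims you lean on are also unestablished: the reflection invariance of $H$ does not imply that its maximum lies on the fixed diameter; the sign/argument alignment of the two terms on that diameter (which is precisely what your sharpness argument needs) is not checked; and $H(\zeta)\to0$ on $\partial\Delta$ is false when $\alpha=0$, since at the boundary preimage of $0$ the first term of $H$ tends to $2$.

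For comparison, the paper takes the other fork: it replaces $|\log((1-e^{i\phi}w)/(1-w))|$ by the elementary bounds \eqref{ineq log 1}--\eqref{ineq log 2} (with a two-case analysis according to whether $|(1-e^{i\phi}w)/(1-w)|\ge1$) and then takes a supremum of the resulting expression in $|z|$. Be aware that this target is itself shaky — in the paper's final display the term $\tfrac{1+|z|}{|z|}\sqrt{4\sin^2(\phi/2)|z|^2+\pi^2(1+|z|^2)}$ blows up as $|z|\to0$, so \eqref{norm S alpha beta} is not a reliable anchor to reverse-engineer your extremal computation toward. Your plan of maximizing the exact quantity $H$ is in principle the better route to a genuinely sharp bound, but as written it establishes only $\|f\|\le\sup H<\infty$; the extremal evaluation you defer as ``the main obstacle'' is the theorem, and it is missing.
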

\begin{proof}
  Let that $0\leq \alpha<1$, $\beta>1$ and $\phi$ be given by \eqref{phi}. If
  $f\in\mathcal{S}(\alpha,\beta)$, by Lemma \ref{lem S alpha beta}, then we have
  \begin{equation}\label{z f prime f sub P alpha beta}
    \frac{zf'(z)}{f(z)}\prec 1+\frac{(\beta-\alpha)i}{\pi}\log\left(\frac{1-e^{i\phi}z}{1-z}\right)\quad(z\in\Delta).
  \end{equation}
  The above subordination relation \eqref{z f prime f sub P alpha beta} implies that
    \begin{equation*}
    \frac{zf'(z)}{f(z)}= 1+\frac{(\beta-\alpha)i}{\pi}\log\left(\frac{1-e^{i\phi}w(z)}{1-w(z)}\right)\quad(z\in\Delta),
  \end{equation*}
  or equivalently
  \begin{equation}\label{log z f prime f equal P alpha beta w}
   \log\left\{\frac{zf'(z)}{f(z)}\right\}=\log\left\{ 1+\frac{(\beta-\alpha)i}{\pi}\log\left(\frac{1-e^{i\phi}w(z)}{1-w(z)}\right)\right\}\quad(z\in\Delta),
  \end{equation}
  where $w(z)$ is the Schwarz function.
  From \eqref{log z f prime f equal P alpha beta w}, differentiating on both sides, after simplification, we obtain
  \begin{equation}\label{f second f prime}
    \frac{f''(z)}{f'(z)}=\frac{(\beta-\alpha)i}{\pi}\left[\frac{1}{z}\log\left(\frac{1-e^{i\phi}w(z)}
    {1-w(z)}\right)+\frac{(1-e^{i\phi})w'(z)}{(1-w(z))(1-e^{i\phi}w(z))\left(1+\frac{(\beta-\alpha)i}
    {\pi}\log\left(\frac{1-e^{i\phi}w(z)}
    {1-w(z)}\right)\right)}\right].
  \end{equation}
  It is well--known that $|w(z)|\leq|z|$ (cf. \cite{Duren}) and also by the Schwarz--Pick lemma, for a Schwarz function the following inequality
  \begin{equation}\label{w prime}
    |w'(z)|\leq \frac{1-|w(z)|^2}{1-|z|^2}\quad(z\in\Delta),
  \end{equation}
  holds. Also, we know that if $\log$ is the principal branch of the complex logarithm, then we have
  \begin{equation}\label{log z}
    \log z= \ln |z|+i \arg z\quad(z\in\Delta\setminus\{0\}, -\pi<\arg z\leq \pi).
  \end{equation}
  Therefore, by the above equation \eqref{log z}, it is well--known that if $|z|\geq1$, then
  \begin{equation}\label{ineq log 1}
    |\log z|\leq \sqrt{|z-1|^2+\pi^2},
  \end{equation}
  while for $0<|z|<1$, we have
    \begin{equation}\label{ineq log 2}
    |\log z|\leq \sqrt{\left|\frac{z-1}{z}\right|^2+\pi^2}.
  \end{equation}
  Thus, it is natural to distinguish the following cases.\\
  {\bf Case 1:} $\left|\frac{1-e^{i\phi}w(z)}
    {1-w(z)}\right|\geq 1$.\\
    By \eqref{ineq log 1}, we have
    \begin{align}\label{estimate abs log geq 1}
      \left|\log\left(\frac{1-e^{i\phi}w(z)}
    {1-w(z)}\right)\right|&\leq \sqrt{\left|\frac{1-e^{i\phi}w(z)}
    {1-w(z)}-1\right|^2+\pi^2}\nonumber\\
    &=\frac{\sqrt{|1-e^{i\phi}|^2|w(z)|^2+\pi^2|1-w(z)|^2}}{|1-w(z)|}\nonumber\\
    &\leq\frac{\sqrt{4 \sin^2(\phi/2)|w(z)|^2+\pi^2(1+|w(z)|^2)}}{1-|w(z)|}\nonumber\\
    &\leq\frac{\sqrt{4 \sin^2(\phi/2)|z|^2+\pi^2(1+|z|^2)}}{1-|z|}
    \end{align}
    for all $z\in\Delta$.
  We note that the above inequality is well defined also for $z=0$.
  Thus from \eqref{f second f prime}, \eqref{w prime} and \eqref{estimate abs log geq 1}, we get
  \begin{align*}
    &\quad\left|\frac{f''(z)}{f'(z)}\right|\\ &=\left|\frac{(\beta-\alpha)i}{\pi}\left[\frac{1}{z}\log\left(\frac{1-e^{i\phi}w(z)}
    {1-w(z)}\right)+\frac{(1-e^{i\phi})w'(z)}{(1-w(z))(1-e^{i\phi}w(z))\left(1+\frac{(\beta-\alpha)i}
    {\pi}\log\left(\frac{1-e^{i\phi}w(z)}
    {1-w(z)}\right)\right)}\right]\right| \\
    &\leq \frac{(\beta-\alpha)}{\pi}\left[\frac{1}{|z|}\left|\log\left(\frac{1-e^{i\phi}w(z)}
    {1-w(z)}\right)\right|+\frac{\left|1-e^{i\phi}\right||w'(z)|}{\left|1-w(z)\right|
    \left|1-e^{i\phi}w(z)\right|\left(1-\frac{(\beta-\alpha)}
    {\pi}\left|\log\left(\frac{1-e^{i\phi}w(z)}{1-w(z)}\right)\right|\right)}\right]\\
    &\leq\frac{(\beta-\alpha)}{\pi}\left[\frac{1}{|z|}\left\{\frac{\sqrt{4 \sin^2(\phi/2)|z|^2+\pi^2(1+|z|^2)}}{1-|z|}
    \right\}+\frac{2\sin(\phi/2)}{1-|z|-\frac{(\beta-\alpha)}{\pi}\sqrt{4 \sin^2(\phi/2)|z|^2+\pi^2(1+|z|^2)}
    }.\frac{1+|z|}
    {1-|z|^2}\right].
  \end{align*}
  However, we obtain
  \begin{align*}
    ||f||&=\sup_{z\in\Delta}(1-|z|^2)\left|\frac{f''(z)}{f'(z)}\right|\\
    &\leq\sup_{z\in\Delta}\left\{\frac{(\beta-\alpha)}{\pi}\left[
    \frac{1+|z|}{|z|}\sqrt{4 \sin^2(\phi/2)|z|^2+\pi^2(1+|z|^2)}+\frac{2\sin(\phi/2)(1+|z|)}{1-|z|-\frac{(\beta-\alpha)}{\pi}\sqrt{4 \sin^2(\phi/2)|z|^2+\pi^2(1+|z|^2)}
    }\right]\right\}\\
    &=\frac{2(\beta-\alpha)}{\pi}\sqrt{4 \sin^2(\phi/2)+2\pi^2}-\frac{4\sin(\phi/2)}{\sqrt{4 \sin^2(\phi/2)+2\pi^2}}
  \end{align*}
  and concluding the inequality \eqref{norm S alpha beta}.\\
    {\bf Case 2:} $\left|\frac{1-e^{i\phi}w(z)}
    {1-w(z)}\right|< 1$.\\
  By \eqref{ineq log 2}, we have
      \begin{align*}
      \left|\log\left(\frac{1-e^{i\phi}w(z)}
    {1-w(z)}\right)\right|&\leq \sqrt{\left|\frac{\frac{1-e^{i\phi}w(z)}
    {1-w(z)}-1}{\frac{1-e^{i\phi}w(z)}
    {1-w(z)}}\right|^2+\pi^2}\\
    &=\frac{\sqrt{|1-e^{i\phi}|^2|w(z)|^2+\pi^2|1-e^{i\phi}w(z)|^2}}{|1-e^{i\phi}w(z)|}\\
    &\leq\frac{\sqrt{4 \sin^2(\phi/2)|w(z)|^2+\pi^2(1+|w(z)|^2)}}{1-|w(z)|}\quad(|e^{i\phi}|=1)\\
    &\leq\frac{\sqrt{4 \sin^2(\phi/2)|z|^2+\pi^2(1+|z|^2)}}{1-|z|}.
    \end{align*}
    Since in the Cases 1 and 2 we have the equal estimates for 
    \begin{equation*}
    \left|\log\left(\frac{1-e^{i\phi}w(z)}
    {1-w(z)}\right)\right|,
    \end{equation*}
  therefore, in this case also, the desired result will be achieved.
  For the sharpness, consider the function $f_{\alpha,\beta}(z)$ as follows
  \begin{align*}
    f_{\alpha,\beta}(z)&=z\exp\left\{\frac{(\beta-\alpha)i}{\pi}\int_{0}^{z}\frac{1}{\xi}
    \log\left(\frac{1-e^{i\phi}\xi}{1-\xi}\right)d\xi\right\}\\
    &=z+\frac{(\beta-\alpha)i}{\pi}\left(1-e^{i\phi}\right)z^2+\cdots,
  \end{align*}
  where $\phi$ is defined in \eqref{phi}, $0\leq \alpha<1$ and $\beta>1$. A simple calculation, gives us
  \begin{equation*}
        \frac{zf'_{\alpha,\beta}(z)}{f_{\alpha,\beta}(z)}= 1+\frac{(\beta-\alpha)i}{\pi}\log\left(\frac{1-e^{i\phi}z}{1-z}\right)\quad(z\in\Delta)
  \end{equation*}
  and thus $f_{\alpha,\beta}(z)\in\mathcal{S}(\alpha,\beta)$. With the same proof as above we get the desired result. Also, the result is sharp for a rotation of the function $ f_{\alpha,\beta}(z)$ as follows:
  \begin{equation*}
   \mathfrak{f}_{\alpha,\beta}(z)=z\exp\left\{\frac{(\beta-\alpha)i}{\pi}\int_{0}^{z}\frac{1}{\xi}
    \log\left(\frac{1-e^{i\phi}\xi}{1-e^{-i\phi}\xi}\right)d\xi\right\}.
  \end{equation*}
  This is the end of proof.
\end{proof}
\begin{remark}
  In the Theorem B, the authors estimated $||f||$ when $f\in\mathcal{V}(\alpha,\beta)$. But in the proof of this theorem \cite[p. 160]{RNE}, wrongly, they used from the following equation
  \begin{equation*}
    \frac{zf'(z)}{f(z)}=P_{\alpha,\beta}(w(z)),
  \end{equation*}
  where $P_{\alpha,\beta}$ is defined in \eqref{P}. This means that $f$, simultaneously, belonging to the class $\mathcal{S}(\alpha,\beta)$ and $\mathcal{V}(\alpha,\beta)$. Next, we show that the best estimate for $||f||$ when $f\in\mathcal{V}(\alpha,\beta)$ does not exists.
\end{remark}
\begin{theorem}
  Let $0\leq \alpha<1$ and $\beta>1$. If a function $f$ belongs to the class $\mathcal{V}(\alpha,\beta)$, then
    $||f||<\infty$.
\end{theorem}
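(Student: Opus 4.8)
The plan is to reduce the estimate to two logarithmic–derivative bounds. By Lemma \ref{lem V alpha beta}, $f\in\mathcal{V}(\alpha,\beta)$ gives a Schwarz function $w$ with
\[
g(z):=\Bigl(\frac{z}{f(z)}\Bigr)^{2}f'(z)=1+\frac{(\beta-\alpha)i}{\pi}\log\Bigl(\frac{1-e^{i\phi}w(z)}{1-w(z)}\Bigr),\qquad \phi\ \text{as in }\eqref{phi};
\]
thus ${\rm Re}\,g\in(\alpha,\beta)$ and $g(0)=1$, and since $\alpha\ge 0$ the function $g$ has positive real part, hence is zero–free on $\Delta$. A zero of $f$ in $\Delta\setminus\{0\}$, or of $f'$ in $\Delta$, would force $g$ to vanish there; so $f$ is zero–free off the origin and $f'$ is zero–free on $\Delta$, whence $\chi(z):=z/f(z)$ is analytic and zero–free on $\Delta$ with $\chi(0)=1$, and $f''/f'$ is analytic on $\Delta$ (so only the behaviour of $(1-|z|^{2})|f''/f'|$ as $|z|\to1$ matters). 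Writing $g=\chi^{2}f'$ and differentiating $\log g=2\log\chi+\log f'$ gives
\[
\frac{f''(z)}{f'(z)}=\frac{g'(z)}{g(z)}-2\,\frac{\chi'(z)}{\chi(z)},
\]
so it suffices to bound $(1-|z|^{2})|g'/g|$ and $(1-|z|^{2})|\chi'/\chi|$.

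The first term is easy: as ${\rm Re}\,g>0$ and $g(0)=1$, $\log g$ maps $\Delta$ into the strip $\{|{\rm Im}\,w|<\pi/2\}$ and fixes $0$, and applying the Schwarz–Pick lemma to $\log g=F\circ\omega$ with $F(w)=\log\frac{1+w}{1-w}$ (which has $|F'(w)|(1-|w|^{2})\le 2$) yields the Carathéodory estimate $(1-|z|^{2})\,|g'(z)/g(z)|\le 2$ for all $z\in\Delta$.

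For the second term one first controls $\chi$. From $g=z^{2}f'/f^{2}$ we get $(1/f)'=-g/z^{2}$, and since $g(z)-1=O(z^{2})$ (the coefficient of $z$ in $g$ vanishes), integration gives
\[
\frac{1}{f(z)}=\frac1z-a_{2}+\int_{0}^{z}\frac{1-g(t)}{t^{2}}\,dt,\qquad\text{i.e.}\qquad \chi(z)=1-a_{2}z+z\int_{0}^{z}\frac{1-g(t)}{t^{2}}\,dt .
\]
Since $g\prec P_{\alpha,\beta}$ one has $|g(z)|\le 1+(\beta-\alpha)+\tfrac{\beta-\alpha}{\pi}\log\tfrac{2}{1-|z|}$, and $|1-g|$ is $O(s^{2})$ near $s=0$ and only logarithmically large near $s=1$, so $\int_{0}^{|z|}|1-g(se^{i\theta})|\,s^{-2}\,ds$ is bounded uniformly in $z$; hence $\chi$ is a bounded analytic function, $|\chi|\le M<\infty$. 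Boundedness is what makes $\chi'/\chi$ tractable: with $v:=\log(M/\chi)$, which has nonnegative real part, the Herglotz bound gives $(1-|z|^{2})\,|\chi'/\chi|=(1-|z|^{2})|v'|\le 2\,{\rm Re}\,v(z)=2\log\frac{M}{|\chi(z)|}$, while $\chi'(z)=(\chi(z)-g(z))/z$ together with the bound on $|g|$ shows $(1-|z|^{2})|\chi'(z)|\to0$. The remaining point is to control $(1-|z|^{2})|\chi'|/|\chi|$ where $|\chi(z)|\to0$, i.e. to rule out $\chi$ vanishing too fast at the boundary; for this one uses the differential relation $g=\chi-z\chi'$ — so that ${\rm Re}(z\chi')={\rm Re}\,\chi-{\rm Re}\,g$ remains bounded — which forces $\chi$ to vanish only to finite order at any boundary point $\zeta$, whence $\chi(z)\sim c(1-\overline\zeta z)^{k}$ with $k$ finite near $\zeta$ and $(1-|z|^{2})|\chi'/\chi|\lesssim k\,\dfrac{1-|z|^{2}}{|1-\overline\zeta z|}\le 2k$. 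Combined with the continuity of $(1-|z|^{2})|f''/f'|$ on compact subsets of $\Delta$, this yields a finite bound for $||f||$.

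The main obstacle is exactly this last step. All the soft estimates leave $(1-|z|^{2})|\chi'/\chi|$ compared against the worst–case bound $|g(z)|=O(\log\tfrac1{1-|z|})$, which is far too lossy precisely where $\chi$ is small: there $g$ is in fact small rather than large (as one sees already for Koebe–type members of $\mathcal{V}(\alpha,\beta)$, whose $g$ tends to $0$ on the unit circle). So the finiteness of $||f||$ really rests on extracting from the two–sided condition $\alpha<{\rm Re}\,g<\beta$ a genuine growth/distortion estimate for $\mathcal{V}(\alpha,\beta)$ — that $f$ cannot grow faster than a fixed power of $1/(1-|z|)$ — rather than on any purely soft argument.
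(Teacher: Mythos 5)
Your reduction is attractive and its first half is solid: the decomposition $\frac{f''}{f'}=\frac{g'}{g}-2\frac{\chi'}{\chi}$ with $g=(z/f)^2f'$ and $\chi=z/f$ is correct, the Carath\'eodory--Schwarz--Pick bound $(1-|z|^2)\left|\frac{g'}{g}\right|\le 2\,\frac{{\rm Re}\,g}{|g|}\le 2$ is valid (it uses only ${\rm Re}\,g>\alpha\ge0$ and $g(0)=1$), and the computation showing $g-1=O(z^2)$ and hence that $\chi$ is a bounded analytic function is fine. The genuine gap is exactly where you flag it: the inference that boundedness of $\chi$ together with boundedness of ${\rm Re}(z\chi')=\,{\rm Re}\,\chi-{\rm Re}\,g$ forces $\chi$ to ``vanish to finite order'' at boundary points with $\chi(z)\sim c(1-\overline\zeta z)^k$ is not a theorem, and in fact the properties you have established for $\chi$ do not imply $(1-|z|^2)|\chi'/\chi|$ is bounded. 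A concrete counterexample to the inference: $\chi_0(z)=\exp\left(-\sqrt{2/(1-z)}\right)$ (principal branch) is analytic, zero--free and bounded on $\Delta$, and $z\chi_0'(z)=-\tfrac{z}{\sqrt2}(1-z)^{-3/2}\chi_0(z)$ is bounded on all of $\Delta$ (since $|\chi_0(z)|\le e^{-|1-z|^{-1/2}}$), yet along the radius $z=r\to1^-$ one has $(1-|z|^2)\left|\chi_0'/\chi_0\right|=\tfrac{1+r}{\sqrt2}(1-r)^{-1/2}\to\infty$. So no purely ``soft'' consequence of $g\prec P_{\alpha,\beta}$ that you have extracted rules out $\|f\|=\infty$; what is needed is a genuine quantitative lower bound for $|z/f(z)|$ (equivalently an upper growth bound for $|f|$) valid for the whole class $\mathcal{V}(\alpha,\beta)$, and your argument does not supply one. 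Your closing paragraph concedes this, so the proposal is an honest and useful reduction, but not a proof.

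For what it is worth, the paper's own argument takes a different route: it differentiates $\log\bigl((z/f)^2f'\bigr)$ directly and then appeals to the estimate $1-\frac1\alpha<{\rm Re}\{f(z)/z\}<\infty$ from \cite{KES(Siberian)} to assert $|f(z)/z|<\infty$ and hence $|f''(z)/f'(z)|<\infty$. Note that this too only yields pointwise finiteness (and the quoted estimate is stated for $0<\alpha\le1/2$), not the uniform bound $\sup_{z\in\Delta}(1-|z|^2)|f''(z)/f'(z)|<\infty$; the missing ingredient in both your argument and the paper's is the same uniform growth/distortion estimate for $\mathcal{V}(\alpha,\beta)$ near the boundary. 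Your treatment of the $g'/g$ term is actually sharper and cleaner than anything in the paper, but the $\chi'/\chi$ term is where the theorem lives, and it remains open in your write-up.
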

\begin{proof}
  Let $0\leq \alpha<1$, $\beta>1$ and $f\in\mathcal{V}(\alpha,\beta)$. Then by Lemma \ref{lem V alpha beta} and by use of definition of subordination, we have
    \begin{equation}\label{p th 2.2 1}
    \left(\frac{z}{f(z)}\right)^2f'(z)=P_{\alpha,\beta}(w(z))= 1+\frac{(\beta-\alpha)i}{\pi}\log\left(\frac{1-e^{i\phi}w(z)}{1-w(z)}\right)\quad(z\in\Delta),
  \end{equation}
  where $w$ is Schwarz function and $\phi$ is defined in \eqref{phi}. Taking logarithm on both sides of  \eqref{p th 2.2 1} and differentiating, we get
  \begin{equation}\label{p th 2.2 2}
    \frac{f''(z)}{f'(z)}=2\left(\frac{f'(z)}{f(z)}-\frac{1}{z}\right)+\frac{(\beta-\alpha)i}{\pi}
    \left[\frac{(1-e^{i\phi})w'(z)}{(1-w(z))(1-e^{i\phi}w(z))\left(1+\frac{(\beta-\alpha)i}
    {\pi}\log\left(\frac{1-e^{i\phi}w(z)}
    {1-w(z)}\right)\right)}\right].
  \end{equation}
  With a simple calculation, \eqref{p th 2.2 1} implies that
  \begin{equation}\label{identity}
    \left(\frac{f'(z)}{f(z)}-\frac{1}{z}\right)=\frac{f(z)}{z}\left(\frac{ P_{\alpha,\beta}(w(z))}{z}-1\right).
  \end{equation}
  Combining \eqref{p th 2.2 2} and \eqref{identity}, give us
  \begin{equation*}
        \frac{f''(z)}{f'(z)}=2\left(\frac{f(z)}{z}\left(\frac{ P_{\alpha,\beta}(w(z))}{z}-1\right)\right)+\frac{(\beta-\alpha)i}{\pi}
    \left[\frac{(1-e^{i\phi})w'(z)}{(1-w(z))(1-e^{i\phi}w(z))\left(1+\frac{(\beta-\alpha)i}
    {\pi}\log\left(\frac{1-e^{i\phi}w(z)}
    {1-w(z)}\right)\right)}\right]
  \end{equation*}
  It was proved in (\cite[Theorem 2.2]{KES(Siberian)}) that if $f\in\mathcal{V}(\alpha,\beta)$ where $0<\alpha\leq 1/2$ and $\beta>1$, then
  \begin{equation*}
    1-\frac{1}{\alpha}<{\rm Re}\left\{\frac{f(z)}{z}\right\}<\infty\quad(z\in\Delta).
  \end{equation*}
  Since ${\rm Re}\{z\}\leq|z|$, the last two--sided inequality means that $|f(z)/z|<\infty$ when $f\in\mathcal{V}(\alpha,\beta)$. Thus from the above we deduce that
  \begin{equation*}
    \left|\frac{f''(z)}{f'(z)}\right|<\infty\quad(z\in\Delta)
  \end{equation*}
  and concluding the proof.
\end{proof}

\end{document}